\documentclass[12pt, reqno]{amsart}
\usepackage{amsmath, amstext, amsbsy, amssymb, amscd}
\usepackage{amsxtra}
\usepackage{amscd}
\usepackage{amsthm}
\usepackage{amsfonts}
\usepackage{eucal}
\usepackage{color}
\usepackage[all]{xy}
\usepackage[CJKbookmarks=true]{hyperref}

\setlength{\textheight}{8.6in} \setlength{\textwidth}{35pc}
\setlength{\topmargin}{-0.1in} \setlength{\footskip}{0.2in}
\setlength{\oddsidemargin}{.573125pc}
\setlength{\evensidemargin}{\oddsidemargin}

\newtheorem{theorem}{Theorem}[section]
\newtheorem{lemma}[theorem]{Lemma}
\newtheorem{prop}[theorem]{Proposition}
\newtheorem{corollary}[theorem]{Corollary}
\theoremstyle{definition}

\newtheorem{remark}[theorem]{Remark}

\numberwithin{equation}{section}

\def\ggg{\mathfrak{g}}

\makeatletter
\newcommand{\rmnum}[1]{\romannumeral #1}
\newcommand{\Rmnum}[1]{\expandafter\@slowromancap\romannumeral #1@}
\makeatother

{\vskip-\lastskip\medskip
  \noindent
  {\em #1.}\enspace
  }%
{\qed\par\medskip
  }

\begin{document}

\title[ On  the Premet conjecture for finite W-superalgebras ]{On the Premet conjecture for  finite W-superalgebras }

\author{Husileng Xiao}

\address{ School of mathematical Science, Harbin Engineering University, Harbin, 15001, China }\email{hslxiao@hrbeu,edu.cn}
\begin{abstract}
Let $\bullet^{\dag}$ be the map constructed by Losev, which sends the set of two sided ideals of a finite W-algebras to that of the universal enveloping algebra of corresponding Lie algebras. The Premet conjecture which was proved in \cite{Lo11}, says that, restricted to the set of primitive ideals with finite codimension,  any fiber of the map $\bullet^{\dag}$ is a single orbit under an action of a finite group $C_e$.  In this article we formulate and prove a similar fact in the super case.  This will give  a classification to  the set of finite dimensional irreducible representations of W-superalgebras provided $C_{e}$ is a trivial group and the set of  primitive ideals of  the corresponding universal enveloping   algebra of  Lie superalgebra  is  known. 
\end{abstract}
\maketitle
\section{introduction}

The finite W-superalgebras   are mathematically a super generalization of the finite W-algebras.   They  are closely related to the supersymmetric theories in physics as Lie superalgebras.
Generalizing the groundbreaking work \cite{Pr1} of  Premet in non-super case,  Wang and Zhao in \cite{WZ}  gave a mathematical definition of  finite W-superalgebras from modular representation theory of Lie superalgebras. In the present paper we study finite dimensional irreducible  presentations of  W-superalgebras.
In  \cite{BBG} and \cite{BG},  the authors  establish  a Yangian presentation of  the W-superalgebra corresponding   to a principal nilpotent orbit in the general Lie superalgebras.
Relying on this explicit presentation, they gave  a  description of $\mathrm{Irr}^{\mathrm{fin}}(\mathcal{W})$ and further detailed information on the highest weight structure. 
Amitsur-Levitzki identity was proved   for W-superalgebras  associated with principal nilpotent orbits for $Q(N)$  in \cite{PS1}. Then the authors obtain that any irreducible
representation of the W-superalgebra  is finite dimensional. Those  results seem to  indicate that the representation theory of the  finite W-superalgebras is quite different from that of finite W-algebras.
 Presenting  a set  of generators  of  the W-superalgebras associated to the minimal nilpotent orbits,  Zeng and Shu  constructed irreducible representations of them with  dimension 1 or 2, see \cite{ZS}.
 However unlike in the case of finite W-algebras,  some fundamental problems in representation theory for general finite W-superalgebras  are  still open. 
 In \cite{SX}  the authors generalize the  Losev's  Poisson geometric approach to the super case and made a  step to give classification of finite dimensional irreducible representation of  for general finite W-superalgebras.  In this article we make a progress to this problem, by proving  Premet's  conjecture for Lie superalgebras of  basic types.   We hope that the readers  could feel  from here that the difference between  representation theory of finite W-algebras and   W-superalgebras probably   not exceeds the difference  between representation theory of Lie algebras and Lie superalgebras.

Let $\ggg=\ggg_{\bar{0}}\oplus \ggg_{\bar{1}}$ be a basic Lie superalgebra over an algebraically closed field $\mathbb{K}$, $\mathcal{U}$ and $\mathcal{U}_{0}$ be the enveloping algebra of $\mathfrak{g}$ and $\ggg_{\bar{0}}$ respectively.  Denote by $(\bullet, \bullet)$ the Killing form on it. Let $e \in \ggg_{\bar{0}}$ and $\chi \in \ggg_{\bar{0}}^{*}$ be the corresponding element to $e$ via the the Killing form. Pick an $\mathfrak{sl}_2$-triple $\{ f,h,e \} \subset \ggg_{\bar{0}}$ and let $\ggg=\oplus_{i}\ggg(i)$ (\textit{resp}. $\ggg_{\bar{0}}=\oplus_{i}\ggg(i)\cap\ggg_{\bar{0}}$) be the corresponding good $\mathbb{Z}$-grading. Denote by $\mathcal{W}$ and $\mathcal{W}_{0}$ the W-algebras associated to the pairs $(\ggg,e)$ and $(\ggg_{\bar{0}},e)$ respectively.  Let $\tilde{\mathcal{W}}$ be the extended W-superalgebra defined in $\mathcal{A}_{\ddag}$ \S3 \cite{SX}( or $\mathcal{A}_{\dag}$ \S6 \cite{Lo15}).
It was  obtained in \cite{SX} that there is a following relation among the three kind of W-algebras. (1), we have embedding
$\mathcal{W}_{0} \hookrightarrow \tilde{\mathcal{W}}$ and the later is generated over the former by $\dim(\ggg_{\bar{1}})$ odd elements.
(2), we have decomposition $\tilde{\mathcal{W}}=\mathrm{Cl}(V_{\bar{1}})\otimes \mathcal{W}$ of associative algebras, where $\mathrm{Cl}(V_{\bar{1}})$ is the Clifford algebra over a vector space $V_{\bar{1}}$ with a non-degenerate symmetric two form, see Theorem \ref{them1} for the details.
 Essentially, as we pointed out in \cite{SX},  this  makes $\mathcal{W}_{0}$  to  $\mathcal{W}$ as $\mathcal{U}_{0}$ to $\mathcal{U}$.

For a given associative algebra $A$, denote by $\mathfrak{id}(A)$  the set of two sided ideals of $A$ and by $\mathrm{Prim}^{\mathrm{cofin}}(A)$  the set of primitive ideals of $A$ with finite codimension in $A$. It is well known that $\mathrm{Prim}^{\mathrm{cofin}}(A)$ is bijective with the set $\mathrm{Irr}^{\mathrm{fin}}(A)$ of isomorphism classes of finite dimensional irreducible $A$ modules.
In \cite{Los10} Losev  constructed a ascending map $\bullet^{\dag} : \mathfrak{id}(\mathcal{W}_{0}) \longrightarrow \mathfrak{id}(\mathcal{U}_{0})$ and descending map $\bullet_{\dag} : \mathfrak{id}(\mathcal{U}_{0}) \longrightarrow \mathfrak{id}(\mathcal{W}_{0})$. Those two maps are crucial to his study on representations of $\mathcal{W}_{0}$.
The ascending map $\bullet^{\dag}$ sends $\mathrm{Prim}^{\mathrm{cofin}}(\mathcal{W}_{0})$ to the set $\mathrm{Prim}_{\mathbb{O}}(\mathcal{U}_{0})$ of primitive ideals of $\mathcal{U}_{0}$ supported on the Zariski closure of the adjoint orbit $\mathbb{O}=G_{\bar{0}} \cdot e$.
 Denote by  $Q=Z_{G_{\bar{0}}}\{ e,h,f \}$ the stabilizer of the triple $\{ e,h,f \}$ in $G_{\bar{0}}$ under the adjoint action.
 Let $C_{e}=Q/Q^\circ$, where $Q^\circ$ is the identity component of $Q$.  The Premet conjecture which was proved in \cite{Lo11}, is saying that for any
 $\mathcal{I} \in \mathrm{Prim}_{\mathbb{O}}(\mathcal{U}_{0}) $ the set $\{ \mathcal{I} \mid  \mathcal{I} \in \mathrm{Prim}^{\mathrm{fin}} (\mathcal{W}) ,  \mathcal{J}^{\dag}=\mathcal{I} \}$ is a single $C_{e}$ orbit.
 This gives an  almost complete classification of  $\mathrm{Irr}^{\mathrm{fin}}(\mathcal{W}_{0})$.

In this paper we generalize the above fact to the super case. The super analogue of the maps $\bullet^{\dag}$ and $\bullet_{\dag}$ were established in \cite{SX}. By abuse of notation, we also denote it by $\bullet^{\dag}$ and $\bullet_{\dag}$ from now on.
Denote by $\mathrm{Prim}_\mathbb{O}(\mathcal{U})$ the set of primitive ideals of $\mathcal{U}$ supported on the Zariski closure of the adjoint orbit $\mathbb{O}=G_{\bar{0}} \cdot e$, see \S 2 for the precise meaning  of  the term `supported' in the super context.
In \S 2 we will construct an action of $Q$  on $\tilde{\mathcal{W}}$ with a property that $Q^{\circ}$ leaves any two sided ideal of $\tilde{\mathcal{W}}$ stable, see Proposition \ref{prop2.1}. This provide us  an action of $C_e$ on $\mathfrak{id}(\tilde{\mathcal{W}})$.
The main result of the present paper is following.
\begin{theorem} \label{mainthm}
For any $\mathcal{J} \in \mathrm{Prim}_\mathbb{O}(\mathcal{U})$,  the set $\{ \mathrm{Cl}(V_{\bar{1}}) \otimes \mathcal{I} \mid  \mathcal{I} \in \mathrm{Prim}^{\mathrm{fin}} (\mathcal{W}) ,\quad  \mathcal{J}^{\dag}=\mathcal{I} \}$ is a single $C_e$-orbit.
\end{theorem}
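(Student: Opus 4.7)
The plan is to transfer the statement into the extended W-superalgebra $\tilde{\mathcal{W}}$ and then adapt the strategy of \cite{Lo11} to the super setting. By Theorem \ref{them1}, $\tilde{\mathcal{W}} \cong \mathrm{Cl}(V_{\bar{1}}) \otimes \mathcal{W}$. Since the Clifford algebra on a non-degenerate quadratic space is simple as a $\mathbb{Z}_2$-graded algebra, the assignment $\mathcal{I} \mapsto \mathrm{Cl}(V_{\bar{1}}) \otimes \mathcal{I}$ is a bijection $\mathfrak{id}(\mathcal{W}) \xrightarrow{\sim} \mathfrak{id}(\tilde{\mathcal{W}})$ preserving primitivity and finite codimension. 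Under this bijection the theorem becomes the assertion that the fiber of the induced map $\bullet^{\dag} : \mathrm{Prim}^{\mathrm{cofin}}(\tilde{\mathcal{W}}) \to \mathrm{Prim}_{\mathbb{O}}(\mathcal{U})$ over $\mathcal{J}$ is a single $C_e$-orbit, with $C_e$ acting on $\mathfrak{id}(\tilde{\mathcal{W}})$ via Proposition \ref{prop2.1}.

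First I would dispatch the two easier properties of the fiber. Non-emptiness follows from the descending map $\bullet_{\dag}$ of \cite{SX}: $\mathcal{J}_{\dag}$ lies in the fiber by the compatibility $(\mathcal{J}_{\dag})^{\dag} = \mathcal{J}$ established for $\mathcal{J} \in \mathrm{Prim}_{\mathbb{O}}(\mathcal{U})$, which is a direct super-analogue of \cite{Los10}. For $C_e$-stability, the construction of $\bullet^{\dag}$ in \cite{SX} is built from $Q$-equivariant data, so the fiber is $Q$-stable; since the action of $Q^{\circ}$ on $\mathcal{U}$ is through inner automorphisms (integrating the adjoint action of $\mathrm{Lie}(Q^{\circ})$), it fixes every two-sided ideal, and hence the action of $Q$ on the fiber factors through $C_e$.

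The main step is the single-orbit property. I would follow the overall plan of \cite{Lo11}. Pass to Losev's completion $\mathcal{U}^{\wedge_\chi}$ along the slice transverse to $\mathbb{O}$ at $\chi$, together with the super-decomposition $\mathcal{U}^{\wedge_\chi} \cong \tilde{\mathcal{W}}^{\wedge} \widehat{\otimes} \mathbb{A}^{\wedge}$ established in \cite{SX}, where $\mathbb{A}$ is the Weyl--Clifford algebra on the super-symplectic complement. This lets one recast the fiber over $\mathcal{J}$ in terms of simple Harish-Chandra $(\tilde{\mathcal{W}},\tilde{\mathcal{W}})$-superbimodules whose central annihilator equals $\mathcal{J}$. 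Given $\mathcal{I}_1, \mathcal{I}_2$ in the fiber, one constructs such a simple finite-dimensional super-bimodule $\mathcal{B}$ with left annihilator $\mathcal{I}_1$ and right annihilator $\mathcal{I}_2$; a super-Skolem--Noether argument then yields a $\mathbb{Z}_2$-graded isomorphism $\tilde{\mathcal{W}}/\mathcal{I}_1 \xrightarrow{\sim} \tilde{\mathcal{W}}/\mathcal{I}_2$, which one lifts to an automorphism $\sigma$ of $\tilde{\mathcal{W}}$; finally $\sigma$ is identified with the action of some $g \in Q$ by a rigidity statement about automorphisms of $\tilde{\mathcal{W}}$ preserving the Kazhdan filtration, and the class of $g$ in $C_e$ sends $\mathcal{I}_1$ to $\mathcal{I}_2$.

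The hard part will be the rigidity / super-Skolem--Noether step. In the non-super case the analogous rigidity, which says that filtration-preserving automorphisms of $\mathcal{W}_{0}$ come, modulo inner ones, from $Q$, is proved in \cite[\S 4]{Lo11}. In the super setting parities must be tracked carefully and the Clifford tensor factor accommodated; in particular the super-Skolem--Noether theorem only determines the lifted automorphism up to the grading automorphism of $\mathrm{Cl}(V_{\bar{1}})$, so some care is needed to ensure that $\sigma$ genuinely descends to an element of $C_e$ rather than a finite extension thereof. The passage to $\tilde{\mathcal{W}}$ rather than $\mathcal{W}$ itself is essential here, since only $\tilde{\mathcal{W}}$ carries a compatible $Q$-action by Proposition \ref{prop2.1}. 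Once this step is in place, the remaining parts of the argument of \cite{Lo11} transfer with only routine modifications.
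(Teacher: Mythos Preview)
Your overall strategy of passing to $\tilde{\mathcal{W}}$ is correct, but your plan for the single-orbit step takes a substantially harder route than the paper's. You propose to re-prove, in the super setting, the low-level machinery of \cite{Lo11}: construct a simple Harish--Chandra super-bimodule, invoke a super-Skolem--Noether theorem, and establish a super-rigidity statement for filtration-preserving automorphisms of $\tilde{\mathcal{W}}$. As you yourself note, this last step is genuinely delicate (parity issues, the extra Clifford factor, the possibility of landing only in an extension of $C_e$), and the paper does \emph{not} do any of this.

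Instead, the paper bypasses all super-rigidity by viewing $\mathcal{U}$ as a Harish--Chandra bimodule over the \emph{even} enveloping algebra $\mathcal{U}_0$ and applying Theorem~4.1.1 of \cite{Lo11} as a black box. Concretely: identify the fiber with the set of minimal primes $\mathcal{I}_1,\dots,\mathcal{I}_l$ over $\mathcal{J}_\dag$ (Proposition~\ref{finitefiberprop}); form the $Q$-stable ideal $\bigcap_{\gamma\in C_e}\gamma(\mathrm{Cl}(V_{\bar 1})\otimes\mathcal{I}_1)$ in $\tilde{\mathcal{W}}$; then the round-trip identity $((\,\cdot\,)^{\tilde\dag})_{\tilde\dag}=\mathrm{id}$ for $Q$-stable ideals, which is exactly what \cite[Theorem~4.1.1]{Lo11} supplies for HC $\mathcal{U}_0$-bimodules, forces this intersection to equal $\mathcal{J}_{\tilde\dag}=\mathrm{Cl}(V_{\bar 1})\otimes\mathcal{J}_\dag$. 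Uniqueness of minimal-prime decompositions then shows every $\mathcal{I}_j$ is in the $C_e$-orbit of $\mathcal{I}_1$. No new super-rigidity or super-Skolem--Noether theorem is needed.

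A small correction to your non-emptiness argument: $\mathcal{J}_\dag$ itself need not be primitive, so it does not literally lie in the fiber; what one uses is that any minimal prime over $\mathcal{J}_\dag$ lies in the fiber (this is precisely Proposition~\ref{finitefiberprop}).
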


Our strategy to prove the theorem is that we apply Theorem 4.1.1 \cite{Lo11} to the Harish-Chandra bimodule module $\mathcal{U}$ over $\mathcal{U}_{0}$
and the relation among $\mathcal{W}$, $\mathcal{W}_{0}$ and $\tilde{\mathcal{W}}$ obtained in Theorem 3.11 \cite{SX}. Our approach is
highly inspired by \S 6 \cite{Lo15}.

We can recover $\mathcal{I}$  from  $\mathrm{Cl}(V_{\bar{1}}) \otimes \mathcal{I}$ by Corollary \ref{coro2.4}.
It was proved in Theorem 4.8 \cite{SX} that the map $\bullet^\dag$ sends $\mathrm{Prim}^{\mathrm{cofin}}(\mathcal{W})$ to $\mathrm{Prim}_{\mathbb{O}}(\mathcal{U})$.  Thus Theorem \ref{mainthm}  almost completely reduced the problem of classifying  $\mathrm{Prim}^{\mathrm{fin}} (\mathcal{W})=\mathrm{Irr}^{\mathrm{fin}}(\mathcal{W})$ to that of $\mathrm{Prim}(\mathcal{U})$.   Provided  $\mathrm{Prim}(\mathcal{U})$  is known and $C_e$ is trivial, Theorem \ref{mainthm}  gives  a description of   $\mathrm{Irr}^{\mathrm{fin}}(\mathcal{W})$, see \S \ref{2.4}.   For the recent progress  on  primitive ideals of Lie superalgebras,  see \cite{CM} and \cite{Mu}, for example.

\section{Proof on the main result}

We first recall the Poisson geometric realization of finite W-(super)algebra in the sense of Losev.
Denote by $A_{0}$ (\textit{resp.} $A$) the Poisson (\textit{resp.} super) algebra
$S[\ggg_{\bar{0}}]$(\textit{resp. $S[\ggg]$}) with the standard bracket $\{ ,\}$ given by $\{x,y\}=[x,y]$ for all $x,y \in \ggg_{\bar{0}}$ (\textit{resp.} $\ggg$).
Let $\hat{A}_{0}$ (\textit{resp.} $\hat{A}$) be the completion of $A_{0}$ (\textit{resp.} $A$) with respect to the point $\chi \in \ggg_{\bar{0}}^{*}$(\textit{resp.} $\ggg$).
Let  $\mathcal{U}_{\hbar,0}^{\wedge}$ (\textit{resp.}  $\mathcal{U}_{\hbar}^{\wedge}$  )  be the formal quantization of $\hat{A}_{0}$ (\textit{resp.} $A$) given by $x\ast y -y\ast x=\hbar^{2}[x,y]$ for all $x,y \in \ggg_{\bar{0}}$. Equip all the above algebras with the Kazadan $\mathbb{K}^{*}$ actions arise from the good $\mathbb{Z}$-grading on $\ggg$ and $t\cdot \hbar=t\hbar$ for all $t \in \mathbb{K}^{*}$.

 Denote by $\omega$ the super even symplectic  form on $[f,\ggg]$ given by $\omega(x,y)=\chi([x,y])$. Let $V=V_{\bar{0}} \oplus V_{\bar{1}}$ be the superspace $ [f,\ggg]$ if $ \dim(\ggg(-1))$ is even. If $\dim(\ggg(-1))$ is odd, let
$V \subset [f,\ggg]$ be a superpaces with a standard basis $v_i$ with  $\omega(v_i,v_j)=\delta_{i,-j}$ for all  $i,j \in \{\pm1,\ldots, \pm (\dim([f,\ggg])-1)/2 \}$. We chose such a $V$ in the present paper  for considering the definition of W superalgebra given in \cite{WZ}.   All the statements in the present paper still valid even if we just take $V=[f,\ggg]$.

For a superspace $V$ with an even sympletic form, we denote by  $\mathbf{A}_{\hbar}(V)$ the corresponding Weyl superalgebra, see Example1.5 \cite{SX} for the definition. Specially, if $V$ is pure odd, we denote by $\mathrm{Cl}_{\hbar}(V)$ the Weyl algebra $\mathbf{A}_{\hbar}(V)$.

It  was obtained in \S2.3 \cite{Lo11} that  there is a $Q \times \mathbb{K}^{*}$-equivariant
$$\Phi_{0,\hbar}: \mathbf{A}_{\hbar}^{\wedge}(V_{\bar{0}}) \otimes \mathcal{W}_{0,\hbar}^{\wedge} \longrightarrow \mathcal{U}_{0,\hbar}^{\wedge} $$
isomorphism of quantum algebras. Moreover

\begin{prop} \label{prop2.1}
\begin{itemize}
\item[(1)]
We have  a $Q \times \mathbb{K}^{*}$-equivariant
$$ \tilde{\Phi}_{\hbar}:  \mathbf{A}_{\hbar}^{\wedge}(V_{\bar{0}}) \otimes \mathcal{\tilde{W}}_{\hbar}^{\wedge} \longrightarrow \mathcal{U}_{\hbar}^{\wedge}$$
and  $\mathbb{K}^{*}$-equivariant isomorphism
$$\Phi_{1,\hbar}: \mathrm{Cl}_{\hbar}(V_{\bar{1}})\otimes \mathcal{W}_{\hbar}^{\wedge} \longrightarrow \mathcal{\tilde{W}}_{\hbar}$$
of quantum algebras.
Finally this give us a $\mathbb{K}^{*}$-equivariant isomorphism
$$ \Phi_{\hbar}:  \mathbf{A}_{\hbar}^{\wedge}(V) \otimes \mathcal{W}_{\hbar}^{\wedge} \longrightarrow \mathcal{U}_{\hbar}^{\wedge}$$
of quantum algebras.
\item[(2)] There are isomorphisms

$$ (\mathcal{\tilde{W}}_{\hbar}^{\wedge})_{\mathbb{K}^*-{\mathrm{lf}}}/(\hbar-1)=\mathcal{\tilde{W}}; \quad (\mathcal{W}_{0,\hbar}^{\wedge})_{\mathbb{K}^*-{\mathrm{lf}}}/(\hbar-1)=\mathcal{W}_{0} \quad  \text{and} \quad
(\mathcal{W}_{\hbar}^{\wedge})_{\mathbb{K}^*-{\mathrm{lf}}}/(\hbar-1)=\mathcal{W}$$
of associative algebra.
Where, for a vector space $V$ with a $\mathbb{K}^{*}$-action, we denote by $(V)_{\mathbb{K}^*-{\mathrm{lf}}}$ the sum of
all finite dimensional $\mathbb{K}^{*}$-stable subspace of $V$.
\item[(3)] There is an embedding $ \mathfrak{q}:=\mathrm{Lie}(Q) \hookrightarrow  \mathcal{\tilde{W}}$ of Lie algebras such that
the adjoint action of $\mathfrak{q}$ coincides with the differential of the $Q$-action.
\end{itemize}
\end{prop}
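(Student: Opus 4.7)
The plan is to run Losev's formal-quantization slice construction (\S2.3 \cite{Lo11}) in the super setting, and then promote the classical factorization $\tilde{\mathcal{W}}=\mathrm{Cl}(V_{\bar{1}})\otimes\mathcal{W}$ of \cite{SX} to the formal quantum level. For $\tilde{\Phi}_\hbar$, I would repeat Losev's argument with $\ggg_{\bar{0}}$ replaced by $\ggg$: the formal quantization $\mathcal{U}_\hbar^\wedge$ is a $Q\times\mathbb{K}^*$-equivariant deformation of $\hat{A}$, and a super Darboux--Weinstein decomposition at $\chi$ (whose even part is Losev's) exhibits the formal neighborhood as a product of $V_{\bar{0}}$ and the super-Slodowy slice. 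The $Q\times\mathbb{K}^*$-equivariant rigidity of star products on contracting Kazhdan spaces, applied exactly as in \cite{Lo11}, lifts this classical factorization to the desired quantum isomorphism $\tilde{\Phi}_\hbar$.

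Having $\tilde{\Phi}_\hbar$, I would construct $\Phi_{1,\hbar}$ by lifting the decomposition of Theorem \ref{them1} to the formal level. The odd generators that extend $\mathcal{W}_{0,\hbar}^\wedge$ to $\tilde{\mathcal{W}}_\hbar^\wedge$ span a copy of $\mathrm{Cl}_\hbar(V_{\bar{1}})$ inside $\tilde{\mathcal{W}}_\hbar^\wedge$; I would take $\mathcal{W}_\hbar^\wedge$ to be its super-centralizer. A PBW-type count along the Kazhdan filtration, matched against the classical factorization of \cite{SX} at $\hbar=0$, would identify this centralizer as a flat $\hbar$-adic quantization of the Poisson super-Slodowy slice, hence as $\mathcal{W}_\hbar^\wedge$. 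Multiplication of the two factors then yields a $\mathbb{K}^*$-equivariant algebra map $\Phi_{1,\hbar}$ whose classical limit recovers the Clifford decomposition, so flatness of both sides upgrades it to an isomorphism. Composing $\tilde{\Phi}_\hbar\circ(\id\otimes\Phi_{1,\hbar})$ and using $\mathbf{A}_\hbar^\wedge(V)=\mathbf{A}_\hbar^\wedge(V_{\bar{0}})\otimes\mathrm{Cl}_\hbar(V_{\bar{1}})$ produces $\Phi_\hbar$, completing (1). The Clifford splitting is not $Q$-equivariant in general, which is why only $\tilde{\Phi}_\hbar$ carries $Q$-equivariance.

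Part (2) is a routine instance of the Losev dictionary between formal and algebraic quantizations: the Kazhdan $\mathbb{K}^*$-weights on the dense algebraic subalgebra of each $^\wedge$ are bounded below, so the locally finite part is exactly that subalgebra, and modding out $(\hbar-1)$ specializes the formal parameter, recovering $\tilde{\mathcal{W}}$, $\mathcal{W}_0$ and $\mathcal{W}$ respectively; compare \cite{SX}. For (3), I would transport the inner $Q$-action on $\mathcal{U}_\hbar^\wedge$ through $\tilde{\Phi}_\hbar$ to obtain a $Q$-action on $\mathbf{A}_\hbar^\wedge(V_{\bar{0}})\otimes\tilde{\mathcal{W}}_\hbar^\wedge$ whose quantum moment map splits along the tensor factors; the $\tilde{\mathcal{W}}_\hbar^\wedge$-component of the image of $\mathfrak{q}\subset\ggg_{\bar{0}}\subset\mathcal{U}_\hbar^\wedge$ lands in the $\mathbb{K}^*$-locally finite part, and by (2) defines a map $\mathfrak{q}\to\tilde{\mathcal{W}}$. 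It is a Lie algebra homomorphism because the quantum moment map intertwines brackets and commutators, injective because the $Q$-action on $\tilde{\mathcal{W}}$ is effective, and its $\ad$ implements the differentiated $Q$-action by the defining property of a quantum moment map.

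The main obstacle is the construction of $\Phi_{1,\hbar}$: showing that the odd lifts produce Clifford relations exactly (not merely modulo $\hbar$) and that their super-centralizer is precisely a flat quantization of the even slice (not something larger) requires a careful PBW/Hilbert-series argument along the $\hbar$-adic Kazhdan filtration. Everything else is an adaptation of results already in \cite{Lo11} and \cite{SX} to the present formal-quantum setting.
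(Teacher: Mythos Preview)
Your proposal is broadly correct and lands in the same place as the paper, but for part~(1) the paper takes a shortcut you miss. Rather than re-running Losev's equivariant rigidity argument in the super setting to produce $\tilde{\Phi}_\hbar$, the paper simply harvests from the already-established $Q\times\mathbb{K}^*$-equivariant isomorphism $\Phi_{0,\hbar}:\mathbf{A}_\hbar^\wedge(V_{\bar{0}})\otimes\mathcal{W}_{0,\hbar}^\wedge\to\mathcal{U}_{0,\hbar}^\wedge$ of \cite{Lo11} a $Q$-equivariant embedding $V_{\bar{0}}\hookrightarrow\mathcal{U}_{0,\hbar}^\wedge\subset\mathcal{U}_\hbar^\wedge$ satisfying the Weyl relations, and then feeds that embedding into the machine of Theorem~1.6 of \cite{SX} (which manufactures the full isomorphism from such Darboux data). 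The $Q$-equivariance is thus inherited for free from the purely even case, and no super version of the star-product rigidity theorem is needed. Your route would presumably work but requires verifying that Losev's rigidity argument goes through when the ambient algebra has odd directions, which is extra labor the paper avoids. For $\Phi_{1,\hbar}$ and $\Phi_\hbar$ your plan coincides with the paper's (both defer to Case~1 of Theorem~1.6 \cite{SX}); the obstacle you flag about exact Clifford relations and the centralizer being the correct size is exactly what that theorem handles. Part~(2) is as you say. For part~(3) the paper is terser than you: it observes that $\mathcal{U}$ is a Harish-Chandra $\mathcal{U}_0$-bimodule and invokes \S2.5 of \cite{Lo11} directly, which packages the quantum-moment-map splitting you spell out.
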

\begin{proof}
(1)
Suppose that $V_{\bar{0}}$ has a basis $\{ v_{i} \}_{ 1 \leq |i| \leq l}$ with $\omega(v_i,v_j)=\delta_{i,-j}$.
The isomorphism  $\Phi_{0,\hbar}$ gives us a $Q$-equivariant embedding $\tilde{\Phi}_{\hbar}: V_{\bar{0}} \hookrightarrow \mathcal{U}_{\hbar}^{\wedge}$ with
$[\tilde{\Phi}_{\hbar}(v_i), \tilde{\Phi}_{\hbar}(v_j)]=\delta_{i,-j}\hbar$.  Now the isomorphism $\tilde{\Phi}_{\hbar}$ can be constructed as in the proof of Theorem 1.6 \cite{SX}. For the construction of isomorphism $\Phi_{1,\hbar}$, see  also Case 1 in the proof of Theorem 1.6 \cite{SX}.
The isomorphism  $\Phi_{\hbar}$ can be constructed from the embedding $ \Phi_{\hbar}: V \hookrightarrow \mathcal{U}_{\hbar}^{\wedge}$ given by
$\Phi_{\hbar}|_{V_{\bar{0}}}=\tilde{\Phi}_{\hbar}$ and $\Phi_{\hbar}|_{V_{\bar{1}}}=\Phi_{1,\hbar}$.

(2) The first isomorphism was proved in \cite{Lo11}. The remaining statements follow by  a similar argument as in the proof of Theorem 3.8 \cite{SX}.

(3) View $\mathcal{U}$ as Harish-Chandra $\mathcal{U}_0$ bimodule and   use \S2.5 \cite{Lo11}.
\end{proof}

\begin{remark}
In the proposition above we are not claiming that $\Phi_{\hbar}$ is $Q$-equivariant, although this is probably true.
\end{remark}

Proposition \ref{prop2.1} give us following  $Q \times \mathbb{K}^{*}$-equivarian version of   Theorem 4.1 \cite{SX} . 
\begin{theorem}\label{them1}
\begin{itemize}
\item[(1)]We have a $Q \times \mathbb{K}^{*}$-equivariant  embedding  $\mathcal{W}_{0} \hookrightarrow  \tilde{\mathcal{W}}$ of associative  algebras. The later is generated over the former by $\dim(\ggg_{\bar{1}})$ odd elements.
\item[(2)] Moreover we have isomorphism
$$ \Phi_1 : \tilde{\mathcal{W}} \longrightarrow \mathrm{Cl}(V_{\bar{1}})\otimes \mathcal{W}$$
 of algebras. Here $\mathrm{Cl}(V_{\bar{1}})$ is the Clifford algebra on the vector space $V_{\bar{1}}=[\mathfrak{g},f]_{\bar{1}}$
 with  symmetric two from $\chi([\cdot,\cdot])$.
\end{itemize}
\end{theorem}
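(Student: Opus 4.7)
The plan is to deduce both parts of Theorem \ref{them1} directly from Proposition \ref{prop2.1} by passing from the completed quantum algebras to their classical limits, via the $\mathbb{K}^*$-locally finite functor followed by specialization at $\hbar=1$. For part (1), I would first construct the embedding at the quantum level. Both $\Phi_{0,\hbar}$ and $\tilde{\Phi}_{\hbar}$ realize their respective W-algebras (or extended W-algebras) as the centralizer of the image of the same subspace $V_{\bar{0}}$ inside $\mathcal{U}_{0,\hbar}^{\wedge}$ and $\mathcal{U}_{\hbar}^{\wedge}$ respectively; by the construction recalled in the proof of Proposition \ref{prop2.1}(1), $\tilde{\Phi}_{\hbar}|_{V_{\bar{0}}}$ agrees with $\Phi_{0,\hbar}|_{V_{\bar{0}}}$ under the canonical inclusion $\mathcal{U}_{0,\hbar}^{\wedge} \hookrightarrow \mathcal{U}_{\hbar}^{\wedge}$. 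Taking centralizers of $V_{\bar{0}}$ on both sides therefore yields an inclusion $\mathcal{W}_{0,\hbar}^{\wedge} \hookrightarrow \tilde{\mathcal{W}}_{\hbar}^{\wedge}$, which is $Q \times \mathbb{K}^*$-equivariant because the ambient inclusion and both $\Phi_{0,\hbar}, \tilde{\Phi}_{\hbar}$ are.

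Next I would pass to $\mathbb{K}^*$-locally finite vectors and quotient by $(\hbar-1)$. Proposition \ref{prop2.1}(2) translates the inclusion into a $Q \times \mathbb{K}^*$-equivariant embedding $\mathcal{W}_0 \hookrightarrow \tilde{\mathcal{W}}$. The generation statement, that $\tilde{\mathcal{W}}$ is generated over $\mathcal{W}_0$ by $\dim(\ggg_{\bar{1}})$ odd elements, is a PBW-type count: at the associated graded level, $\gr \mathcal{W}_0$ and $\gr \tilde{\mathcal{W}}$ identify with the coordinate rings of the even and full Slodowy slices, and the quotient is a Grassmann algebra on $\dim\ggg_{\bar{1}}$ generators lifting a basis of $\ggg_{\bar{1}} \cap \ggg^f$. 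This part was already the content of the non-equivariant Theorem 4.1 of \cite{SX}, and the present argument only upgrades it by tracking $Q$-actions throughout.

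For part (2), I would apply the $\mathbb{K}^*$-locally finite functor to $\Phi_{1,\hbar}$ and specialize at $\hbar=1$. Proposition \ref{prop2.1}(2) identifies the target as $\tilde{\mathcal{W}}$. For the source, $(\mathcal{W}_{\hbar}^{\wedge})_{\mathbb{K}^*\text{-lf}}/(\hbar-1)=\mathcal{W}$, and $(\mathrm{Cl}_{\hbar}(V_{\bar{1}}))_{\mathbb{K}^*\text{-lf}}/(\hbar-1)$ is the Clifford algebra on $V_{\bar{1}}$ with the symmetric form $\omega|_{V_{\bar{1}}}(x,y)=\chi([x,y])$, because the quantum relation $x\ast y + y\ast x = \omega(x,y)\hbar^{2}$ on odd elements becomes exactly the Clifford anticommutator at $\hbar=1$. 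Since $\Phi_{1,\hbar}$ is an isomorphism of $\mathbb{K}^*$-equivariant topological algebras and both operations are exact on the $\mathbb{K}^*$-weight decomposition, the induced $\Phi_1 : \tilde{\mathcal{W}} \to \mathrm{Cl}(V_{\bar{1}}) \otimes \mathcal{W}$ is an isomorphism.

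The main obstacle, as I see it, is the bookkeeping that the \emph{same} copy of $V_{\bar{0}}$ is being centralized inside $\mathcal{U}_{0,\hbar}^{\wedge}$ and inside $\mathcal{U}_{\hbar}^{\wedge}$, so that the centralizer in the larger algebra restricts correctly to the centralizer in the smaller one; this is where the explicit extension procedure for $\tilde{\Phi}_{\hbar}$ (arising from Theorem 1.6 \cite{SX}) must be invoked rather than treating $\tilde{\Phi}_{\hbar}$ as an arbitrary equivariant isomorphism. A secondary point is that the full isomorphism $\Phi_{\hbar}$ is only asserted to be $\mathbb{K}^*$-equivariant in Proposition \ref{prop2.1}(1), so the $Q$-equivariance statement in part (1) must be extracted from $\tilde{\Phi}_{\hbar}$ alone, not from $\Phi_{\hbar}$, and part (2) correspondingly makes no $Q$-equivariance claim.
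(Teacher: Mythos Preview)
Your approach is correct and essentially the same as the paper's, which simply defers to Theorem~4.1 of \cite{SX} and notes that Proposition~\ref{prop2.1} supplies the needed $Q\times\mathbb{K}^*$-equivariance. One minor slip in your PBW count for part~(1): the $\dim(\ggg_{\bar 1})$ odd generators of $\tilde{\mathcal W}$ over $\mathcal W_0$ lift a basis of all of $\ggg_{\bar 1}$, not of $\ggg_{\bar 1}\cap\ggg^f$, since $\tilde\Phi_{\hbar}$ splits off only $V_{\bar 0}=[f,\ggg]_{\bar 0}$ and leaves the entire odd part of $\ggg$ inside $\tilde{\mathcal W}_{\hbar}^{\wedge}$; this is precisely why the count is $\dim(\ggg_{\bar 1})$ rather than $\dim(\ggg_{\bar 1}^f)$.
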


\begin{proof}
 Here we can almost repeat the proof of Theorem 4.1 \cite{SX} . 
 \end{proof}

Since it is frequently used in later, it is helpful to recall the construction of  $\Phi_1$ in the following slightly general setting.

\begin{corollary}\label{coro2.4}
	For a two sided ideal  $ \tilde {\mathcal{I}}$ of  $\tilde{\mathcal{W}}$,    we have $\tilde {\mathcal{I}}=\mathrm{Cl}(V_{\bar{1}})\otimes \mathcal{I}$ . Where $\mathcal{I}$ is two sided 
ideal of $\mathcal{W}$  consist of  elements anit-commuting with  $\mathrm{Cl}(V_{\bar{1}})$.
\end{corollary}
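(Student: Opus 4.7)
The plan is to invoke the isomorphism $\Phi_1 : \tilde{\mathcal{W}} \longrightarrow \mathrm{Cl}(V_{\bar{1}}) \otimes \mathcal{W}$ of Theorem \ref{them1}(2), classify the two-sided ideals of $\mathrm{Cl}(V_{\bar{1}}) \otimes \mathcal{W}$ via the (super) simplicity of the Clifford factor, and finally match the resulting ideal of $\mathcal{W}$ with the intrinsic description in the statement.

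First I would check that $\mathrm{Cl}(V_{\bar{1}})$ is a simple super-algebra. Non-degeneracy of the form $\chi([\cdot,\cdot])$ on $V_{\bar{1}}$ implies that $\mathrm{Cl}(V_{\bar{1}})$ is super-isomorphic either to a matrix superalgebra (if $\dim V_{\bar{1}}$ is even) or to an algebra of type $Q$ (if it is odd), both of which are super-simple. A standard argument then applies: fix a homogeneous basis $\{c_\alpha\}$ of $\mathrm{Cl}(V_{\bar{1}})$, write every $x \in \tilde{\mathcal{I}}$ uniquely as $\sum_\alpha c_\alpha \otimes w_\alpha(x)$, and use the super-simplicity of $\mathrm{Cl}(V_{\bar{1}})$ to isolate each coefficient $w_\alpha(x)$ by pre- and post-multiplying $x$ by suitable elements of $\mathrm{Cl}(V_{\bar{1}}) \otimes 1$ (and reading off the component at a chosen basis vector). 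The set $\mathcal{J}$ of all coefficients $w_\alpha(x)$ obtained in this way is a two-sided ideal of $\mathcal{W}$, and $\tilde{\mathcal{I}} = \mathrm{Cl}(V_{\bar{1}}) \otimes \mathcal{J}$.

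It remains to match $\mathcal{J}$ with the description given in the statement. Under $\Phi_1$, the subalgebra $\mathcal{W} \hookrightarrow \tilde{\mathcal{W}}$ corresponds to $1 \otimes \mathcal{W}$, which is characterized inside $\mathrm{Cl}(V_{\bar{1}}) \otimes \mathcal{W}$ as the super-centralizer of $\mathrm{Cl}(V_{\bar{1}}) \otimes 1$. For a homogeneous $w \in \mathcal{W}$ and an odd generator $v \in V_{\bar{1}} \subset \mathrm{Cl}(V_{\bar{1}})$, the super-commutation relation reads $wv = -(-1)^{|w|}vw$, which is exactly the anti-commutation condition in the corollary. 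Hence $\mathcal{I} := \tilde{\mathcal{I}} \cap \mathcal{W}$ coincides with $\mathcal{J}$, and we obtain the description in the statement. The main obstacle is the ideal-classification step for $\mathrm{Cl}(V_{\bar{1}}) \otimes \mathcal{W}$: one must handle the Koszul signs in the super tensor product carefully and distinguish the matrix versus type-$Q$ cases for $\mathrm{Cl}(V_{\bar{1}})$, but both are standard, so the argument essentially reduces to quoting super-simplicity of $\mathrm{Cl}(V_{\bar{1}})$ and recognizing $\mathcal{W}$ as its super-centralizer inside $\tilde{\mathcal{W}}$.
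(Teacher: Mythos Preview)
Your argument is correct and rests on the same underlying fact as the paper's: the Clifford factor is (super)simple, so every two-sided ideal of $\mathrm{Cl}(V_{\bar 1})\otimes\mathcal W$ has the form $\mathrm{Cl}(V_{\bar 1})\otimes\mathcal I$. The only difference is organizational. The paper does not invoke the structure theorem for $\mathrm{Cl}(V_{\bar 1})$ in one stroke; instead it picks Clifford generators $x_1,\dots,x_n$ with $x_i^2=1$ and $x_ix_j=-x_jx_i$, peels off a single generator using a one-variable splitting lemma (the ``quantum analogue of Lemma~2.2(2)'' from \cite{SX}), and then inducts on $\dim V_{\bar 1}$. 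Your global approach is slightly cleaner and avoids the external lemma, at the cost of quoting the matrix/type-$Q$ classification of Clifford superalgebras; the paper's inductive approach is more self-contained but amounts to reproving that simplicity one generator at a time. One small slip: your displayed relation $wv=-(-1)^{|w|}vw$ has an extra sign; in the super tensor product one gets $wv=(-1)^{|w|}vw$, which is precisely the super-centralizer condition you intend (and is what the paper is calling ``anti-commuting'').
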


\begin{proof}

By   Theorem   \ref{them1} (2)   there exist $x_1, \ldots, x_{\dim(V_{\bar{1}})} \in \tilde{\mathcal{W}}$  with 
$$x_i^2=1 \text {  and } x_ix_j=-x_jx_i \text{ for all  distinct $i , j  \in \{ 1, \ldots , \dim(V_{\bar{1}})\}$.}$$     
By  a   quantum analogue of Lemma 2.2(2) \cite{SX} we have  that 
$ \tilde{\mathcal{I}}$  is equal to $\mathrm{Cl}(\mathbb{K}\langle  x_1 \rangle ) \otimes \tilde{\mathcal{I}}_1$ as rings.  Here  we denote by $\tilde{\mathcal{I}}_1$   the space anti-commuting with $x_1$.
Now the corollary follows by induction to $\dim(V_{\bar{1}})$.
\end{proof}

\subsection{The maps $\bullet^{\dag}$ and $\bullet_{\dag}$}

We recall the construction of maps $\bullet^{\dag}$ and $\bullet_{\dag}$  between $\mathfrak{id}(\mathcal{W})$
and $\mathfrak{id}(\mathcal{U})$ in \cite{SX} at  first. For  $\mathcal{I} \in \mathfrak{id}(\mathcal{W})$,  denote by $\mathrm{R}_{\hbar}(\mathcal{I}) \subset \mathcal{W}_{\hbar}$ the Rees algebra
associated with $\mathcal{I}$ and $\mathrm{R}_{\hbar}(\mathcal{I})^{\wedge} \subset \mathcal{W}_{\hbar}^{\wedge}$ by completion of $\mathrm{R}_{\hbar}(\mathcal{I})$ at $0$. Let $\mathbf{A}(\mathcal{I})^{\wedge}_{\hbar}=\mathbf{A}_{\hbar}(V)^{\wedge}\otimes \mathrm{R}_{\hbar}(\mathcal{I})^{\wedge}$ and set $\mathcal{I}^{\dag}=(\mathcal{U}_{\hbar} \cap \Phi_{\hbar}(\mathbf{A}(\mathcal{I})^{\wedge}_{\hbar}))/(\hbar-1)$. For an ideal $\mathcal{J} \in \mathfrak{id}(\mathcal{U})$, denote by $\bar{\mathcal{J}}_{\hbar}$ the closure of $\mathbf{R}_{\hbar}(\mathcal{J})$ in $\mathcal{U}^{\wedge_{\chi}}_{\hbar}$. Define $\mathcal{J}_{\dag}$ to be the unique (by Proposition 3.4(3) \cite{SX}) ideal in $\mathcal{W}$  such that
$\mathbf{R}_{\hbar}(\mathcal{J}_{\dag})=\Phi_{\hbar}^{-1}(\bar{\mathcal{J}}_{\hbar}) \cap \mathbf{R}_{\hbar}(\mathcal{W}).$

A $\mathfrak{g}_{\bar{0}}$-bimodule $M$ is said to be Harish-Chandra(HC)-bimodule  if $M$ is finitely generated and the adjoint action of $\mathfrak{g}$ on $M$ is locally finite. For any two sided ideal $\mathcal{J}\subset \mathcal{U}$ (\textit{resp}. $\mathcal{I}\subset \tilde{\mathcal{W}}$), we denote by $\mathcal{J}_{\tilde{\dag}}$ (\textit{resp}. $\mathcal{I}^{\tilde{\dag}}$) image of $\mathcal{J}$
under the functor $\bullet_{\dag}$ (\textit{resp}. $\bullet^{\dag}$)  in \S3 \cite{Lo11}.  Here we view $\mathcal{J}$ and $\mathcal{I}$   as a  HC-bimodules over $\mathfrak{g}_{\bar{0}}$ and $\mathcal{W}_{0}$ respectively.
The  following lemma follows directly from  the above construction and Theorem \ref{them1}.
\begin{lemma} \label{lem2.4}
We have that $(\mathrm{Cl(V_{\bar{1}})} \otimes \mathcal{I})^{\tilde{\dag}}=\mathcal{I}^{\dag}$ and $\mathcal{I}_{\tilde{\dag}}=\mathrm{Cl}(V_{\bar{1}})\otimes \mathcal{I}_{\dag}$.
\end{lemma}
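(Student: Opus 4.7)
The plan is to verify both identities by unfolding the four constructions involved and invoking the factorization $\Phi_{\hbar} = \tilde{\Phi}_{\hbar} \circ (\mathrm{id}_{\mathbf{A}_{\hbar}^{\wedge}(V_{\bar{0}})} \otimes \Phi_{1,\hbar})$ supplied by Proposition \ref{prop2.1}(1), together with the super-symplectic decomposition $\mathbf{A}_{\hbar}^{\wedge}(V) \cong \mathbf{A}_{\hbar}^{\wedge}(V_{\bar{0}}) \otimes \mathrm{Cl}_{\hbar}(V_{\bar{1}})$ induced by $V = V_{\bar{0}} \oplus V_{\bar{1}}$. The Losev functors $\bullet^{\tilde{\dag}}$ and $\bullet_{\tilde{\dag}}$ are built from $\tilde{\Phi}_{\hbar}$ in exactly the same way that the super analogues $\bullet^{\dag}$ and $\bullet_{\dag}$ recalled above are built from $\Phi_{\hbar}$; the only formal difference is which quantum isomorphism is used to transfer ideals between the W-side and the enveloping-algebra side.

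For the first identity I would compute $\mathcal{I}^{\dag}$ and $(\mathrm{Cl}(V_{\bar{1}}) \otimes \mathcal{I})^{\tilde{\dag}}$ in parallel. The Kazhdan filtration on $\tilde{\mathcal{W}} \cong \mathrm{Cl}(V_{\bar{1}}) \otimes \mathcal{W}$ is a tensor product of filtrations, so the completed Rees algebra of $\mathrm{Cl}(V_{\bar{1}}) \otimes \mathcal{I}$ equals $\mathrm{Cl}_{\hbar}(V_{\bar{1}}) \otimes \mathrm{R}_{\hbar}(\mathcal{I})^{\wedge}$. Combined with the tensor decomposition of $\mathbf{A}_{\hbar}^{\wedge}(V)$ and the factorization of $\Phi_{\hbar}$, the image $\Phi_{\hbar}(\mathbf{A}_{\hbar}^{\wedge}(V) \otimes \mathrm{R}_{\hbar}(\mathcal{I})^{\wedge})$ coincides with $\tilde{\Phi}_{\hbar}(\mathbf{A}_{\hbar}^{\wedge}(V_{\bar{0}}) \otimes \mathrm{R}_{\hbar}(\mathrm{Cl}(V_{\bar{1}}) \otimes \mathcal{I})^{\wedge})$ inside $\mathcal{U}_{\hbar}^{\wedge}$. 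Intersecting with $\mathcal{U}_{\hbar}$ and passing to $\hbar = 1$ then yields the same ideal on both sides.

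For the second identity, Corollary \ref{coro2.4} furnishes a unique ideal $\mathcal{K} \subset \mathcal{W}$ with $\mathcal{I}_{\tilde{\dag}} = \mathrm{Cl}(V_{\bar{1}}) \otimes \mathcal{K}$, so it suffices to identify $\mathcal{K}$ with $\mathcal{I}_{\dag}$. The defining characterization of $\mathcal{I}_{\tilde{\dag}}$ writes its (extended) Rees algebra as the intersection of $\tilde{\Phi}_{\hbar}^{-1}(\bar{\mathcal{I}}_{\hbar})$ with $\mathbf{A}_{\hbar}^{\wedge}(V_{\bar{0}}) \otimes \mathrm{R}_{\hbar}(\tilde{\mathcal{W}})^{\wedge}$. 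Substituting $\mathrm{R}_{\hbar}(\tilde{\mathcal{W}})^{\wedge} = \mathrm{Cl}_{\hbar}(V_{\bar{1}}) \otimes \mathrm{R}_{\hbar}(\mathcal{W})^{\wedge}$ and then applying $\mathrm{id} \otimes \Phi_{1,\hbar}^{-1}$, the right-hand side becomes $\Phi_{\hbar}^{-1}(\bar{\mathcal{I}}_{\hbar}) \cap (\mathbf{A}_{\hbar}^{\wedge}(V) \otimes \mathrm{R}_{\hbar}(\mathcal{W})^{\wedge})$, which is exactly the defining intersection for $\mathcal{I}_{\dag}$. The uniqueness from Proposition 3.4(3) of \cite{SX} then forces $\mathcal{K} = \mathcal{I}_{\dag}$.

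The main obstacle is the routine but delicate bookkeeping of completions, filtrations, and tensor products with the Clifford factor. Since $\mathrm{Cl}_{\hbar}(V_{\bar{1}})$ is a finitely generated free $\mathbb{K}[\hbar]$-module, it is already $\hbar$-adically complete and tensoring with it commutes with both $\hbar$-adic completion and intersection; this removes the only serious potential source of trouble. The remaining check, namely that $\Phi_{1,\hbar}$ preserves the ideal structure in the way required above, is immediate from the proof of Theorem \ref{them1}.
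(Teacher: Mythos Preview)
Your proposal is correct and is precisely the unpacking of what the paper intends: the paper offers no separate proof, merely stating that the lemma ``follows directly from the above construction and Theorem~\ref{them1}'', and your argument is exactly the direct verification this sentence points to, via the factorization $\Phi_{\hbar} = \tilde{\Phi}_{\hbar} \circ (\mathrm{id} \otimes \Phi_{1,\hbar})$ and the splitting $\mathbf{A}_{\hbar}^{\wedge}(V) \cong \mathbf{A}_{\hbar}^{\wedge}(V_{\bar 0}) \otimes \mathrm{Cl}_{\hbar}(V_{\bar 1})$. The only mild redundancy is the appeal to Corollary~\ref{coro2.4} in the second identity, which is not strictly needed once you have matched the Rees-level intersections, but it does no harm.
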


\subsection{ Properties of  $\bullet^{\dag}$ and $\bullet_{\dag}$ after \cite{SX}}

For an associative algebra $A$, we denote by $\mathrm{GK}\dim(A)$ the Gelfand-Kirillov dimension of $A$ (for the definition, see\cite{KL}).
The \textit{associated variety} $\mathbf{V}(\mathcal{J} )$ of a two sided ideal $\mathcal{J} \in \mathfrak{id}(\mathcal{U})$, is defined to be the associated variety $\mathbf{V}(\mathcal{J}_{0})$ of $ \mathcal{J}_{0}=\mathcal{J} \cap \mathcal{U}$. We say that $\mathcal{J}$ is\textit{ supported} on $\mathbf{V}(\mathcal{J})$ in this case.

\begin{lemma}\label{legkdim}
For any two sided ideal of $\mathcal{I}\subset \mathcal{U}$, we have
$$\mathrm{GK}\dim(\mathcal{U}/\mathcal{I})=\mathrm{GK}\dim(\mathcal{U}_0/\mathcal{I}_{0} )=\dim (\mathbf{V}(\mathcal{I})).$$
\end{lemma}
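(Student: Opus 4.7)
The plan is to reduce the lemma to classical facts by exploiting that $\mathcal{U}$ is a finite module extension of $\mathcal{U}_0$.

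The super-PBW theorem identifies $\mathcal{U}$ with $\mathcal{U}_0 \otimes \Lambda(\ggg_{\bar{1}})$ as a vector space, so $\mathcal{U}$ is free of rank $2^{\dim \ggg_{\bar{1}}}$ as a left (and a right) $\mathcal{U}_0$-module, with basis the ordered monomials in a fixed basis of $\ggg_{\bar{1}}$. For $\mathcal{I} \in \mathfrak{id}(\mathcal{U})$, set $\mathcal{I}_0 = \mathcal{I} \cap \mathcal{U}_0$; this is exactly the kernel of the composition $\mathcal{U}_0 \hookrightarrow \mathcal{U} \twoheadrightarrow \mathcal{U}/\mathcal{I}$, so we obtain an injective ring map $\mathcal{U}_0/\mathcal{I}_0 \hookrightarrow \mathcal{U}/\mathcal{I}$, and $\mathcal{U}/\mathcal{I}$ is generated over $\mathcal{U}_0/\mathcal{I}_0$ by the finite set of images of those PBW monomials on either side. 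Invoking the standard invariance of Gelfand--Kirillov dimension under finite ring extensions (see \cite{KL}) gives the first equality $\mathrm{GK}\dim(\mathcal{U}/\mathcal{I}) = \mathrm{GK}\dim(\mathcal{U}_0/\mathcal{I}_0)$.

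For the second equality I would pass to the associated graded: the PBW filtration on $\mathcal{U}_0$ has $\gr \mathcal{U}_0 = S(\ggg_{\bar{0}})$, and it induces a good filtration on the noetherian algebra $\mathcal{U}_0/\mathcal{I}_0$ whose associated graded is the finitely generated commutative algebra $S(\ggg_{\bar{0}})/\gr \mathcal{I}_0$. Combining the invariance of GK-dimension under passage to $\gr$ for an almost commutative filtered algebra with the equality of GK-dimension and Krull dimension on the commutative side, one concludes $\mathrm{GK}\dim(\mathcal{U}_0/\mathcal{I}_0) = \dim V(\gr \mathcal{I}_0) = \dim \mathbf{V}(\mathcal{I})$, since by the paper's convention the right-hand side is precisely the associated variety of $\mathcal{I}_0$.

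There is no substantive obstacle here; the whole argument is essentially the concatenation of two standard invariance properties of GK-dimension (finite ring extensions, and passage to the associated graded of a good filtration) together with the textbook identification of GK-dimension with Krull dimension for finitely generated commutative algebras. The only points requiring verification are that $\mathcal{U}_0 \cap \mathcal{I} = \mathcal{I}_0$ and that the images of the PBW monomials span $\mathcal{U}/\mathcal{I}$ over $\mathcal{U}_0/\mathcal{I}_0$, both of which are immediate from the super-PBW decomposition.
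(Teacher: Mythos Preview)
Your proof is correct and follows essentially the same approach as the paper. The paper's proof is terser: for the first equality it invokes the PBW basis theorems and the definition of GK-dimension from \cite{KL} (which amounts to your finite-extension argument), and for the second it simply cites Corollary~5.4 of Borho--Kraft \cite{BK} rather than unpacking the passage to $\gr$ as you do.
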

\begin{proof}
Note that we have the natural embedding $ \mathcal{U}_0/\mathcal{I}_{0}  \hookrightarrow \mathcal{U}/\mathcal{I}$. The first equality follows from the definition of Gelfand-Kirillov dimension (see pp.14 Definition \cite{KL} and the remark following it ) and the PBW base theorems for $\mathcal{U}(\ggg_{\bar{0}})$ and $\mathcal{U}$.
The second equality follows form Corollary 5.4 \cite{BK}.
\end{proof}

The following proposition and it's proof are super version of Theorem 1.2.2 (\rmnum{7})\cite{Los10} in a special case.
\begin{prop}\label{finitefiberprop}
For any $ \mathcal{J} \in \mathrm{Prim}_{\mathbb{O}}(\mathcal{U})$,  the set $\{  \mathcal{I}  \in \mathfrak{id} (\mathcal{W}) \mid  \text{ $\mathcal{I} $ is prime, $ \mathcal{I}^{\dag}=\mathcal{J}$  }    \}$ is exactly the minimal prime ideals containing $\mathcal{J}_{\dag}$.
\end{prop}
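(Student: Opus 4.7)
The plan is to reduce this statement to its non-super analogue, Theorem 1.2.2(vii) of \cite{Los10}, by passing through the extended W-superalgebra $\tilde{\mathcal{W}}$ and exploiting the decomposition $\tilde{\mathcal{W}} \cong \mathrm{Cl}(V_{\bar{1}}) \otimes \mathcal{W}$ of Theorem \ref{them1}. Concretely, I would regard $\mathcal{U}$ as a Harish-Chandra bimodule over the reductive-type algebra $\mathcal{U}_0$; in that picture, the finite W-algebra attached to the pair is precisely $\tilde{\mathcal{W}}$, and the associated Losev maps are $\bullet^{\tilde{\dag}}$ and $\bullet_{\tilde{\dag}}$ from \S3 of \cite{Lo11}. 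Losev's result then asserts that for $\mathcal{J} \in \mathrm{Prim}_{\mathbb{O}}(\mathcal{U})$, the prime two-sided ideals $\tilde{\mathcal{I}} \subset \tilde{\mathcal{W}}$ satisfying $\tilde{\mathcal{I}}^{\tilde{\dag}} = \mathcal{J}$ are exactly the minimal primes of $\tilde{\mathcal{W}}$ containing $\mathcal{J}_{\tilde{\dag}}$.

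The next step is to transport this conclusion from $\tilde{\mathcal{W}}$ back to $\mathcal{W}$. By Corollary \ref{coro2.4}, every (graded) two-sided ideal of $\tilde{\mathcal{W}}$ has the form $\mathrm{Cl}(V_{\bar{1}}) \otimes \mathcal{I}$ for a unique two-sided ideal $\mathcal{I} \subset \mathcal{W}$, and passing to quotients gives $\tilde{\mathcal{W}}/(\mathrm{Cl}(V_{\bar{1}}) \otimes \mathcal{I}) \cong \mathrm{Cl}(V_{\bar{1}}) \otimes (\mathcal{W}/\mathcal{I})$. Since $\mathrm{Cl}(V_{\bar{1}})$ is a simple superalgebra (the form on $V_{\bar{1}}$ is non-degenerate), tensoring with $\mathrm{Cl}(V_{\bar{1}})$ yields an inclusion-preserving bijection between graded prime two-sided ideals of $\mathcal{W}$ and those of $\tilde{\mathcal{W}}$. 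In particular this identifies the minimal primes of $\mathcal{W}$ above $\mathcal{J}_{\dag}$ with the minimal primes of $\tilde{\mathcal{W}}$ above $\mathrm{Cl}(V_{\bar{1}}) \otimes \mathcal{J}_{\dag}$.

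Combining the two bijections with Lemma \ref{lem2.4}, which supplies the compatibilities $(\mathrm{Cl}(V_{\bar{1}}) \otimes \mathcal{I})^{\tilde{\dag}} = \mathcal{I}^{\dag}$ and $\mathcal{J}_{\tilde{\dag}} = \mathrm{Cl}(V_{\bar{1}}) \otimes \mathcal{J}_{\dag}$, the set $\{\mathcal{I} \in \mathfrak{id}(\mathcal{W}) \mid \mathcal{I} \text{ prime},\ \mathcal{I}^{\dag} = \mathcal{J}\}$ matches, term by term, the set of minimal primes of $\mathcal{W}$ containing $\mathcal{J}_{\dag}$.

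The main obstacle I expect is confirming that Losev's Theorem 1.2.2(vii), which in \cite{Los10} is stated in the enveloping-algebra setting, can be applied in the present Harish-Chandra bimodule context to yield the indicated statement about $\bullet^{\tilde{\dag}}$ and $\bullet_{\tilde{\dag}}$; this is essentially the reduction carried out in \S6 of \cite{Lo15}, and I would invoke that framework rather than reprove it. A secondary but purely bookkeeping point is checking that primality and minimality of ideals are preserved by the Clifford-tensoring bijection in the graded sense used throughout \cite{SX}; this follows from the super-simplicity of $\mathrm{Cl}(V_{\bar{1}})$ and the ideal structure given by Corollary \ref{coro2.4}.
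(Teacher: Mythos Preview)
Your strategy---pass to $\tilde{\mathcal{W}}$, invoke a version of Losev's Theorem 1.2.2(vii) there, then transport back through the Clifford decomposition and Lemma \ref{lem2.4}---is conceptually aligned with the paper, but the central step does not exist as the black box you want. Theorem 1.2.2(vii) in \cite{Los10} is a statement about two-sided ideals of $\mathcal{U}_0$ and $\mathcal{W}_0$; the functors $\bullet^{\tilde{\dag}}$, $\bullet_{\tilde{\dag}}$ of \S3 \cite{Lo11} operate on Harish-Chandra bimodules and do \emph{not} come packaged with an analogue of that theorem for ideals of $\mathcal{U}$ and $\tilde{\mathcal{W}}$. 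Neither \S6 of \cite{Lo15} nor \cite{Lo11} proves ``prime $\tilde{\mathcal{I}}\subset\tilde{\mathcal{W}}$ with $\tilde{\mathcal{I}}^{\tilde{\dag}}=\mathcal{J}$ are exactly the minimal primes over $\mathcal{J}_{\tilde{\dag}}$''; you would have to establish this yourself, and doing so amounts to reproving the proposition.

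The paper avoids this by arguing the two inclusions directly at the level of $\mathcal{W}$, using the super-specific tools already built in \cite{SX}. For one direction, Proposition 4.5 of \cite{SX} gives $\mathcal{J}_\dag\subset\mathcal{I}$ whenever $\mathcal{I}^\dag=\mathcal{J}$, and finite codimension then forces minimality. For the converse, Proposition 4.6 of \cite{SX} gives that $\mathcal{J}_\dag$ has finite codimension; the paper then passes to $\tilde{\mathcal{I}}_0=\tilde{\mathcal{I}}\cap\mathcal{W}_0$ and uses the identity $\mathcal{I}^\dag\cap\mathcal{U}_0=(\tilde{\mathcal{I}}_0)^{\tilde{\dag}}$ to reduce the support computation to the genuine $\mathcal{W}_0/\mathcal{U}_0$ setting, where the \emph{proof} (not the statement) of Theorem 1.2.2(vii) in \cite{Los10} applies. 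Lemma \ref{legkdim} and Corollary 3.6 of \cite{BK} then force $\mathcal{I}^\dag=\mathcal{J}$. In short, the non-super input is invoked only at the even level $(\mathcal{W}_0,\mathcal{U}_0)$, where it is actually proved, rather than at the HC-bimodule level $(\tilde{\mathcal{W}},\mathcal{U})$, where it is not.
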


\begin{proof}
Suppose that $\mathcal{I}$ is prime ideal of $\mathcal{W}$ with $\mathcal{I}^{\dag}=\mathcal{J}$.  Proposition 4.5 \cite{SX} implies that  $\mathcal{J}_{\dag} \subset \mathcal{I} $.  So $\mathcal{I}$ has finite codimension in  $\mathcal{W}$.  Hence we deduce that $\mathcal{I}$ is minimal by Corollary 3.6 \cite{BK}.
Now suppose that the minimal prime ideal $\mathcal{I} \subset \mathcal{W}$ with $ \mathcal{J}_{\dag}\subset \mathcal{I}$. It is follows from Proposition 4.6 \cite{SX} that $\mathcal{J}_\dag$ has finite codimension in  $\mathcal{W}$. Thus we can see that    $ \tilde{\mathcal{I}}=\mathrm{Cl}(V_{\bar{1}}) \otimes \mathcal{I}$ has finite codimension in $\tilde{\mathcal{W}}$. Whence $\tilde{\mathcal{I}}_{0}=\mathcal{W}_{0} \cap \tilde{\mathcal{I}}$ has  finite  codimension in $\mathcal{W}_{0}$.  Since $\mathcal{I}^{\dag} \cap \mathcal{U}_{0}=(\tilde{\mathcal{I}}_{0})^{\tilde{\dag}}$, we obtain  that $\mathcal{I}^{\dag}$ is supported on $G_{\bar{0}}\cdot \chi$ by the proof of Theorem 1.2.2 (\rmnum{7})\cite{Los10}. Thus by Lemma \ref{legkdim} and Corollary 3.6 \cite{BK}, we have $\mathcal{I}^{\dag}=\mathcal{J}$.
\end{proof}

\subsection{Proof of the main result}

Now we are ready to prove the main result.

\textit{Proof of Theorem \ref{mainthm}} 

We prove the theorem  by a similar argument as in the proof of  Conjecture 1.2.1 \cite{Lo11}. Indeed, let $\mathcal{I}_{1}, \ldots, \mathcal{I}_{l}$ be the minimal prime ideal containing $\mathcal{J}_{\dag}$, for a
fixed $\mathcal{J} \in \mathrm{Prim}_{\mathbb{O}}(\mathcal{U})$. Since $ \mathrm{Cl}(V_{\bar{1}})\otimes\mathcal{I}_{1}$ is stable under $Q^{\circ}$,  $ \bigcap_{\gamma \in C_e} \gamma (\mathrm{Cl}(V_{\bar{1}}) \otimes \mathcal{I}_{1})$ is $Q$-stable.  Set $\mathcal{J}^{1}= (\bigcap_{\gamma \in C_e} \gamma (\mathrm{Cl}(V_{\bar{1}}) \otimes \mathcal{I}_{1}))^{\tilde{\dag}}$, then by Theorem 4.1.1 \cite{Lo11} we have   $(\mathcal{J}^{1})_{\tilde{\dag}}= \bigcap_{\gamma \in C_e} \gamma (\mathrm{Cl}(V_{\bar{1}}) \otimes \mathcal{I}_{1})$.
Thus we have $\mathcal{J} =(\mathcal{I}_1)^{\dag} \supset \mathcal{J}^{1}  \supset  \mathcal{J} $  (The first equality follows from Lemma \ref{legkdim} and Corollary 3.6 \cite{BK}). 
Hence $\mathcal{J}_{\tilde{\dag}}=\bigcap_{\gamma \in C_e} \gamma (\mathrm{Cl}(V_{\bar{1}}) \otimes \mathcal{I}_{1})$.  We obtain  that $\gamma (\mathrm{Cl}(V_{\bar{1}}) \otimes \mathcal{I}_{1})=\mathrm{Cl}(V_{\bar{1}})\otimes \mathcal{I}_{\gamma(1)}$ for some $\gamma(1) \in \{1,\ldots, l \}$ by Proposition 3.1.10 \cite{Di} and Corollary \ref{coro2.4}.  Thus we have
$\mathcal{I}=\bigcap_{\gamma \in C_e}\mathcal{I}_{\gamma(1)}$ by Proposition 3.1.10 \cite{Di} and Lemma \ref{lem2.4}.
Now the proof is completed by Proposition\ref{finitefiberprop}.
                                                                                             $\hfill\Box$


\subsection{ In the special  case:  $C_e=1$} \label{2.4}
	
As we have recalled  in the introduction,   the set $\mathrm{Irr}^{\mathrm{fin}}(\mathcal{W})$ is bijective with $\mathrm{Prim}^{\mathrm{cofin}}(\mathcal{W})$.  Thus  Theorem \ref{mainthm} give us  a bijection between $\mathrm{Prim}_{\mathbb{O}}(\mathcal{U})$  and $\mathrm{Irr}^{\mathrm{fin}}(\mathcal{W})$  provided $C_e=1$ .

In the case of  $\ggg=\ggg_{\bar{0}}+\ggg_{\bar{1}}$  is a basic Lie superalgebra of type \Rmnum{1}
( namely   $A(m|n)$ or $C(n)=\mathfrak{osp}(2,2n-2)$ )  Lezter established a bijection $\nu :  \mathrm{Prim} (\mathcal{U}_{0})\longrightarrow  \mathrm{Prim} (\mathcal{U})$.
It follows from the construction of $\nu$ that  the restriction give us a bijection 
$$\nu _{\mathbb{O}}:  \mathrm{Prim}_{\mathbb{O}} (\mathcal{U}_{0})\longrightarrow  \mathrm{ Prim} _{\mathbb{O}}(\mathcal{U}).$$
So  we can give a description of  $\mathrm{Irr}^{\mathrm{fin}}(\mathcal{W})$  if $C_e$ is trivial.  For all nilpotent  element in type $A(m|n)$  or at least for the regular nilotent element in type 
$C(n)$ Lie superalgebras,  the finite group  $C_e$ is trivial.

In the case of  $\ggg=\mathfrak{osp}(1,2n)$,     a  description of  $\mathrm{Prim}(\mathcal{U})$   given in  Theorem A and B \cite{Mu}.  Any primitive ideal  is  the annihilator  of  a  simple highest weight module  $\hat{L}(\lambda)$ for some  $\lambda \in \mathfrak{h}^{*}$.  The poset structure describing   $\mathrm{Prim}(\mathcal{U})$   is  exactly same as  that $\mathrm{Prim} (\mathcal{U}_{0})$.   It is straightforward to check that  $\hat{L}(\lambda)$ is supported on $\bar{\mathbb{O}}$ if and only if so is $L(\lambda)$. Thus we show that Theorem \ref{mainthm} give us a description of $\mathrm{Irr}^{\mathrm{fin}}(\mathcal{W})$ provided $C_{e}=1$.

\section*{Acknowledgements}
The author is partially supported by NFSC (grant No.11801113) and  RIMS, an international joint usage/research center located in Kyoto University. This work is motivated by communications with Arakawa and is written during the author's visit to him at RIMS. The author indebted much to Arakawa for many helpful discussions.  The author also thank the helpful communications from Bin Shu and Yang Zeng.

\end{document}